\title{Images of dominant endomorphisms of affine space}
\author{Viktor Balch Barth} 
\email{viktorbb@math.uio.no}
\address{Department of Mathematics, UiO, Norway.}
\author{Tuyen Trung Truong}
\email{tuyentt@math.uio.no}
\address{Department of Mathematics, UiO, Norway.}
\lstdefinestyle{mystyle}{
    basicstyle=\ttfamily\footnotesize,
    tabsize=2
}
\pgfplotsset{compat=newest}
\pgfplotsset{compat=newest}
\newcommand\blfootnote[1]{%
  \begingroup
  \renewcommand\thefootnote{}\footnote{#1}%
  \addtocounter{footnote}{-1}%
  \endgroup
}
\definecolor{NTNUblue}{RGB}{0,80,158}
\definecolor{NTNUbluesupport}{RGB}{62,98,138}
\definecolor{NTNUorange}{RGB}{239,129,20}
\definecolor{NTNUpurple}{RGB}{176,27,129}
\definecolor{NTNUturquoise}{RGB}{68,186,190}
\definecolor{NTNUbeige}{RGB}{207,184,135}
\pgfplotsset{compat=1.15}
\newcommand{\fix}[1]{\ifmmode{#1}\else{$#1$}\xspace\fi}
\newcommand{\A}{\mathbb{A}}
\newcommand{\an}{\mathbb{A}^{n}}
\newcommand{\ga}{\mathbb{G}_a}
\newcommand{\defeq}{\vcentcolon =}
\newcommand{\inv}{^{-1}}
\renewcommand{\phi}{\varphi}
\renewcommand{\epsilon}{\varepsilon}
\renewcommand{\and}{\quad \text{ and } \quad}
\newcommand*{\doublerightarrow}[2]{\mathrel{
  \settowidth{\@tempdima}{$\scriptstyle#1$}
  \settowidth{\@tempdimb}{$\scriptstyle#2$}
  \ifdim\@tempdimb>\@tempdima \@tempdima=\@tempdimb\fi
  \mathop{\vcenter{
    \offinterlineskip\ialign{\hbox to\dimexpr\@tempdima+1em{##}\cr
    \rightarrowfill\cr\noalign{\kern.5ex}
    \rightarrowfill\cr}}}\limits^{\!#1}_{\!#2}}}
\newcommand\restr[2]{{
  \left.\kern-\nulldelimiterspace 
  #1 
  \vphantom{\big|} 
  \right|_{#2} 
  }}
\DeclareMathOperator{\aut}{Aut}
\theoremstyle{plain}
\newtheorem{thm}{Theorem}[section]
\newtheorem{cor}[thm]{Corollary}
\newtheorem{lem}[thm]{Lemma}
\newtheorem{conjecture}[thm]{Conjecture}
\theoremstyle{definition}
\newtheorem{test}{theorem}[section]
\newtheorem{subtheo}{Question}[test] 
\newtheorem{exam}[thm]{Example}
\newtheorem{rem}[thm]{Remark}
\begin{document}

\begin{abstract} 
A basic problem in the study of algebraic morphisms is to determine which sets can be realised as the image of an endomorphism of affine space. This paper extends the results previously obtained by the first author on the question of existence of surjective maps $F\colon \mathbb{A}^n \rightarrow \mathbb{A}^n\setminus Z$, where $Z$ is an algebraic subvariety of $\mathbb{A}^n$ of codimension at least 2. In particular, we show that for any (affine) algebraic variety $Z$ of dimension at most $n-2$, there is an algebraic variety $W\subset \mathbb{A}^n$ birational to $Z$ and a surjective algebraic morphism $\mathbb{A}^n\rightarrow \mathbb{A}^n\setminus W$. We also propose a conjectural approach towards resolving unknown cases.
\end{abstract}
\maketitle

\blfootnote{\emph{Key words}: affine space, endomorphism, Hilbert Nullstellensatz, surjectivity}

\section{Introduction} \label{sec: intro}
We work over an algebraically closed field $k$ of characteristic zero. Let $\mathbb{A}^n$ be the affine space of dimension $n$. A basic problem in the study of algebraic morphisms is the following: 

\begin{subtheo}\label{question: image of an to an}
    What subsets of $\mathbb{A}^n$ can be realised as the image of a dominant endomorphism $F\colon\mathbb{A}^n\rightarrow \mathbb{A}^n$?
\end{subtheo}

A classical general result is Chevalley's theorem 
\cite[Th\'{e}or\`{e}me 3]{chevalley1955schemas}
which asserts that the image of an algebraic map is constructible (see the appendix of this paper for some more detail on it). Aside from that, Question \ref{question: image of an to an} is largely open, except for an elementary argument that $\mathbb{A}^n\setminus F(\mathbb{A}^n)$ cannot contain a hypersurface (see e.g. \cite{arzhantsev2023images,kusakabe2022surjective}).

This paper concerns a nontrivial subproblem of Question \ref{question: image of an to an}, aiming to show the existence of and construct explicitly surjective morphisms $F\colon\mathbb{A}^n\rightarrow \mathbb{A}^n\setminus Z$, where $Z$ is a closed algebraic subvariety of codimension at least $2$. This question can naturally be stated as a stronger version of Hilbert's effective Nullstellensatz as follows. Let $(z_1,\ldots ,z_n)$ be the coordinates in the domain,  $(w_1,\ldots ,w_n)$ the coordinates in the range, $F=(F_1,\ldots ,F_n)$ the supposed endomorphism, and $Z=\{g_1=\ldots =g_m=0\}$. We want to check the two conditions: $F(\mathbb{A}^n)\subset \mathbb{A}^n\setminus Z$, and if $w\in \mathbb{A}^n\setminus Z$ then $F^{-1}(w)$ is non-empty. The first condition is equivalent to showing that $\{w_1-F_1=\ldots =w_n-F_n=0 = g_1=\ldots =g_m\}=\emptyset$, which can be interpreted (by Hilbert's Nullstellensatz) as the existence of polynomials $h_1,\ldots ,h_n, h_{n+1},\ldots ,h_{n+m}$ (in variables $z_1,\ldots ,z_n, w_1,\ldots, w_n$) so that $h_1(w_1-F_1)+\ldots h_n(w_n-F_n)+h_{n+1}g_1+\ldots +h_{n+m}g_m$ is identically $1$. The second condition is equivalent to showing that there do not exist  polynomials $h_1,\ldots ,h_n,h_{n+1}$ (in variables $z_1,\ldots ,z_n,t_1,\ldots ,t_m$) and $w_1,\ldots ,w_n$ so that $h_1(w_1-F_1)+\ldots +h_n(w_n-F_n)+h_{n+1}(t_1g_1+\ldots +t_mg_m-1)$ is identically $1$, where $t_1,\ldots ,t_m$ are new variables, which again can be reduced to Hilbert's Nullstellensatz. Note that $\mathbb{A}^n\setminus Z$ is a rational variety, and hence the existence of surjective maps $\mathbb{A}^n\rightarrow \mathbb{A}^n\setminus Z$ is interesting also from the viewpoint of birational geometry. So far, strongest evidences for the existence of such maps come from (algebraic) Oka theory. 

Over the complex numbers, such an $\mathbb{A}^n\setminus Z$ is a holomorphic (sub)elliptic manifold \cite{Gromov1989Okas}, and hence by \cite{Forstneric2017surjective} there are {\bf holomorphic} surjective maps $\mathbb{A}^n\rightarrow \mathbb{A}^n\setminus Z$. 
In the algebraic setting, however, this was previously known only for the case $Z\subset \mathbb{A}^{n-2}\subset \mathbb{A}^n$, by the work of the first author \cite{balch2023surjective} (and some earlier works on dimension $2$, \cite{Jelonek1999Anumberof, el2022counting}). On the other hand, \cite{kusakabe2022surjective}  showed that for the more general class of algebraic manifolds $X$ which are {\bf algebraically subelliptic} (see \cite{Gromov1989Okas} or \cite{Forstneric2017book} for a precise definition, and note the recent result that algebraic ellipticity and subellipticity are equivalent for smooth varieties \cite{KalimanZaidenberg+2023}) we always have surjective morphisms $\mathbb{A}^{\dim(X)+1}\rightarrow X$. (An earlier result \cite{arzhantsev2023images} showed that, for the special class of flexible manifolds $X$---which includes the manifolds of the form $\mathbb{A}^n\setminus Z$ considered in this paper---we always have surjective morphisms $\mathbb{A}^{N}\rightarrow X$,  where in general $N>\dim (X)$). For the case where $X$ is an algebraically (sub)elliptic {\bf compact}  manifold, Forstneri\v{c} showed the existence of a surjective algebraic map $\mathbb{A}^{\dim (X)}\rightarrow X$, \cite[Theorem 1.6]{Forstneric2017surjective}. See \cite{arzhantsev2023varieties} for many examples of algebraically (sub)elliptic compact manifolds.  Whether any algebraically (sub)elliptic  manifold $X$ always admits algebraic surjective morphisms $f\colon \mathbb{A}^{\dim X} \to X$ is an open question asked by several authors
\cite{Forstneric2017surjective,FORSTNERIC2023,Larusson2017Approx}, or indeed how we can characterize those varieties $X$ to which there is such a surjective map \cite[Problem 2]{arzhantsev2023images}. In the case $X=\mathbb{A}^n\setminus Z$, the results in this paper provide a solution to a weaker version of this question, see Corollary \ref{CorollaryBirationalProof}. 

A natural idea to construct surjective morphisms $\mathbb{A}^n\rightarrow \mathbb{A}^n\setminus Z$ is to start from a surjective morphism $g\colon\mathbb{A}^{N}\rightarrow \mathbb{A}^n\setminus Z$ (which exists by the above mentioned results \cite{arzhantsev2023images, kusakabe2022surjective}), then find an appropriate linear subspace $H\subset \mathbb{{A}}^{N}$ of dimension $n$ such that $F=g|_H$ is also surjective. In general, we may need to consider subvarieties $H$ of $ \mathbb{{A}}^{N}$ isomorphic to $\mathbb{A}^n$, and not just linear subspaces. One would expect that a generic choice of $H$ will not yield a surjective map $g|_H$, and one must therefore look for special subvarieties $H$. To this end, it will be very useful---at least in the first stages---to be able to efficiently test if the restrictions $g|_H$ to some reasonable choices of $H$ are surjective.

The main idea from \cite{kusakabe2022surjective} is as follows. If $X$ is a (sub)elliptic manifold of dimenison $n$, then for every $x\in X$ there is an algebraic morphism $F_x\colon\mathbb{A}^n\rightarrow X$---coming from the associated sprays, by using results from Forstneric \cite{Forstneric2006Holomorphicflex}---which is dominant and contains $x$ in its image. By Noetherian property, there is a finite number of such maps $F_{x_1},\ldots ,F_{x_N}\colon\mathbb{A}^n\rightarrow X$, such that the union of the images of $F_{x_1},\ldots ,F_{x_N}$ is $X$. We think about $F_{x_i}$ as a map from a hyperplane $H_i\subset\mathbb{A}^{n+1}$, where $H_i$ and $H_j$ are disjoint if $i\not= j$, and then extend these to a morphism $F\colon\mathbb{A}^{n+1} \rightarrow X$ whose image contains $\bigcup _{i=1}^NF_{x_i}(H_i)=X$.  

In the case where $X$ is of the form $\mathbb{A}^n\setminus Z$, where $Z$ is an algebraic subvariety of codimension at least 2,  a more convenient construction/procedure is given in \cite{arzhantsev2023images}, using the Gromov--Winkelmann theorem \cite[Proposition 1]{Winkelmann}, \cite[p. 72, Exercise (b')]{Gromov1986partial}. See Section \ref{sec: explicit computational} for more detail. This procedure yields an explicit surjective map, but typically with $N>n$. In our experiments, we find that there are cases where $N$ can be chosen to be $n+1$, but there are cases where it is not obvious so.

Our main result is Theorem \ref{thm: main}. 
\begin{thm} \label{thm: main}
Let $Z\subset \{0\} \times \A^{n-1} \subset \A^n$ be an algebraic subvariety. Then there exists a surjective algebraic map $F\colon\A^n\to \A^n\setminus Z$.
\end{thm}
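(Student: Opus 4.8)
The plan is to exhibit one explicit polynomial map $F\colon\an\to\an$ and to verify surjectivity fibrewise over $\anmi$. Write $\pi\colon\an\to\anmi$ for the projection forgetting the first coordinate $z_1$ and put $\overline{Z}:=\pi(Z)\subset\anmi$, so that $Z=\{z_1=0\}\cap\pi^{-1}(\overline{Z})$. The case $Z=\emptyset$ is trivial and the case $\dim Z=n-1$ is impossible by the hypersurface obstruction recalled in the introduction, so I may assume $\overline{Z}$ is a proper closed subvariety of $\anmi$; in particular there is a point outside $\overline{Z}$, and after an affine change of the coordinates $z_2,\dots,z_n$ — which fixes the hyperplane $\{z_1=0\}$ and so only conjugates the problem — I may assume $0\notin\overline{Z}$. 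I first treat the case that $\overline{Z}$ is a hypersurface, say $\overline{Z}=\{f=0\}$ with $f\in k[z_2,\dots,z_n]$ and (after rescaling $f$) $f(0)=1$. The map I propose is
\[
F(t,s_2,\dots,s_n)=\Bigl(\,t+\bigl(\textstyle\sum_{j=2}^{n}s_j^{\,j}\bigr)\,f(s_2t,\dots,s_nt)\,,\ s_2t,\ \dots,\ s_nt\,\Bigr).
\]
Conceptually, one can build a surjection $\an\times\A^1\to\an\setminus Z$ by composing, starting from the origin, the one-parameter subgroup $z_1\mapsto z_1+\tau f(z_2,\dots,z_n)$ of $\aut(\an\setminus Z)$, then the shears $z_j\mapsto z_j+\sigma_j z_1$ (which fix $\{z_1=0\}$ pointwise, hence $Z$), and then once more a subgroup of the first kind, using $n+1$ parameters $\tau,\sigma_2,\dots,\sigma_n,\tau'$. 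Because $Z$ lies in the hyperplane $\{z_1=0\}$, only \emph{one} of these parameters is ``extra'', and $F$ is the restriction of that surjection to the graph $\{\tau'=\sum_j s_j^{\,j}\}\cong\an$ inside $\an\times\A^1$, the polynomial $\sum_j s_j^{\,j}$ being chosen precisely so that the restriction stays surjective.

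For the verification, fix a target $(a,b)\in\an$ with $b=(b_2,\dots,b_n)$. If $b=0$ then $F(0,s_2,\dots,s_n)=\bigl(\sum_j s_j^{\,j},\,0,\dots,0\bigr)$, whose first entry takes every value in $k$; hence the whole $z_1$-axis, which lies in $\an\setminus Z$, is hit. If $b\neq 0$ one is forced to take $t\neq 0$ and $s_j=b_j/t$, and then the first entry equals $\Theta_b(t):=t+f(b)\sum_{j=2}^{n}b_j^{\,j}t^{-j}$. When $b\in\overline{Z}$, i.e.\ $f(b)=0$, this is just $\Theta_b(t)=t$, so $(a,b)$ is hit if and only if $a\neq 0$, which is exactly the condition $(a,b)\in\an\setminus Z$. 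When $b\notin\overline{Z}$, i.e.\ $f(b)\neq 0$, clearing denominators in $\Theta_b(t)=a$ gives $g(t):=t^{n+1}-a t^{n}+f(b)\sum_{j=2}^{n}b_j^{\,j}t^{\,n-j}=0$, a monic polynomial of degree $n+1$; since $b\neq 0$ (using $f(b)\neq 0$) it is not equal to $t^{n+1}$, hence has a root $t_0\neq 0$, and $(t_0,b_2/t_0,\dots,b_n/t_0)$ maps to $(a,b)$. Thus $F$ is surjective onto $\an\setminus Z$, and undoing the coordinate change finishes this case.

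For a general subvariety $\overline{Z}\subset\anmi$ of dimension $\le n-2$ one replaces the correction term $(\sum_j s_j^{\,j})f(s_2t,\dots,s_nt)$ by a finite sum $\sum_{i}\phi_i(s_2,\dots,s_n)\,g_i(s_2t,\dots,s_nt)$, where the $g_i$ run through a generating set of the ideal $I(\overline{Z})$ (together with enough auxiliary monomial multiples to make the argument uniform). Each summand vanishes whenever $(s_2t,\dots,s_nt)\in\overline{Z}$, so the dichotomy $b\in\overline{Z}$ versus $b\notin\overline{Z}$ plays out exactly as above, \emph{provided} the multipliers $\phi_i$ are chosen — as monomials with pairwise distinct degrees, say — so that for every $b\notin\overline{Z}$ the rational function $t\mapsto\sum_i\phi_i(b/t)\,g_i(b)$ genuinely has a pole at $t=0$ and the $b=0$ fibre still sweeps out the $z_1$-axis. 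Producing one such choice of the $\phi_i$ that is valid simultaneously for all $b\notin\overline{Z}$ is the only real work; everything else is the bookkeeping above. The hyperplane hypothesis on $Z$ is what the whole scheme rests on: it is what makes the ``return'' from $\{z_1\neq 0\}$ to the locus $\{z_1=0\}$ cost only a single extra parameter, which the device of the first paragraph then removes.
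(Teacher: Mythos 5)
Your hypersurface case is correct and complete, and it is a genuinely different construction from the paper's: where the paper composes shears $w_{i+1}\mapsto w_{i+1}+a_iw_1$ \emph{before and after} the translation $w_1\mapsto w_1+cq(w_2,\dots,w_n)$ and then kills the extra parameters with the carefully weighted exponents $p_k=d^k+\dots+1$, you use the conical parametrisation $z_j=s_jt$ (translation, then shears scaling with $t$, then one more translation restricted to the graph $\tau'=\sum_j s_j^{\,j}$). The verification — reduce to a monic polynomial $g(t)$ of degree $n+1$ and observe it cannot equal $t^{n+1}$ unless $a=0$ and $b=0$ or $f(b)=0$ — is sound, and for a single defining equation $q_1=f$ your map has lower degree than the paper's. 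In particular the theorem for $n=2$, and for any $Z$ whose image $\overline{Z}$ in $\anmi$ is a hypersurface, is fully proved by your argument.

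However, the general case, where $\overline{Z}=V(g_1,\dots,g_m)$ needs $m\ge 2$ equations, is exactly where the content of the theorem lies, and there your proof has a genuine gap: you defer "the only real work" (choosing the multipliers $\phi_i$), and the choice you do suggest — monomials with pairwise distinct degrees — fails. A nonconstant monomial $\phi_i$ vanishes on a union of coordinate hyperplanes, so one can find $b\notin\overline{Z}$ with $\phi_i(b)=0$ for every index $i$ at which $g_i(b)\neq 0$ (e.g.\ $\phi_1=s_2^{c_1}$, $\phi_2=s_3^{c_2}$, and $b$ with $b_2=0$, $g_2(b)=0$, $g_1(b)\neq 0$); then every term $\phi_i(b/t)g_i(b)$ dies, the first-coordinate equation collapses to $t=a$, and the point $(0,b)\in\an\setminus Z$ is missed. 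What is needed is precisely the anti-cancellation device you already used for one equation, deployed per generator: take $\phi_i(s)=\sum_{j=2}^{n}s_j^{\,d_{i,j}}$ with all the exponents $d_{i,j}\ge 1$ pairwise distinct across both indices, so that $\sum_{i}\phi_i(b/t)g_i(b)=\sum_{i,j}g_i(b)\,b_j^{\,d_{i,j}}t^{-d_{i,j}}$ has coefficients at distinct powers of $t$ and therefore vanishes identically only when $b=0$ or all $g_i(b)=0$. This is the exact counterpart of the paper's exponents $c_j=c^{(j-1)(1+p_{n-1})}$, $b_i=c^{p_{i-1}}$, whose sole purpose is to forbid cancellation between the contributions of different $q_j$'s and of different coordinates. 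With that replacement your argument does close up; as written, it proves the theorem only when $\overline{Z}$ is cut out by one equation.
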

In the proof we explicitly construct $F$. It is possible to do this in different ways. In Theorem \ref{thm: main thm with pure powers}, we give a construction which requires some mild assumptions on the choice of coordinate system, but achieves a map $F$ of lower degree.

Computational tools have been indispensable for us to find first non-trivial examples which in turn provided inspiration and ideas for the above mentioned theorems. The proofs of the above theorems are the generalizations of the proof we found first for a special case of dimension 3, by experimenting with the help of Macaulay2, see Example \ref{ex: ellcurve d=c} and Appendix \ref{subsec: M2 code for example} for detail. Theorem \ref{thm: main} is obtained by first constructing a surjective morphism $\mathbb{A}^{m+2n-2}\rightarrow \mathbb{A}^{n}\setminus Z$ (where $m$ is an upper bound for the number of equations defining  $Z$, and hence we can always choose $m\leq n$ by \cite{Kronecker,Storch1972,EisenbudEvans1973}), 
 and then restricting to an appropriate subvariety $H$ of $\mathbb{A}^{m+2n-2}$ which is isomorphic to $\mathbb{A}^n$. Theorem \ref{thm: main thm with pure powers} is obtained by first constructing a surjective morphism $\mathbb{A}^{n+m}\rightarrow \mathbb{A}^{n}\setminus Z$, and then restricting to an appropriate subvariety $H\cong \an$ of $\mathbb{A}^{n+m}$. 

We obtain the following consequence, which can be viewed as almost a solution to the question about the existence of such surjective morphisms $\mathbb{A}^n\rightarrow \mathbb{A}^n\setminus Z$.  

\begin{cor}
    Let $Z$ be an (affine) algebraic variety of dimension at most $n-2$. Then there are an algebraic variety $W\subset \mathbb{A}^n$ birational to $Z$ and a surjective morphism $F\colon\mathbb{A}^n\rightarrow \mathbb{A}^{n}\setminus W$. 
\label{CorollaryBirationalProof}\end{cor}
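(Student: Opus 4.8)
The plan is to deduce Corollary \ref{CorollaryBirationalProof} directly from Theorem \ref{thm: main} by exhibiting a birational model of $Z$ that sits inside the hyperplane $\{0\}\times\A^{n-1}\subset\A^n$. We may assume $Z$ is irreducible: if $Z$ is reducible one applies the argument to each irreducible component separately (or simply replaces $Z$ by a component of maximal dimension), and in any case "birational" is only meaningful component by component. Write $d=\dim Z\le n-2$. The point is that, unlike an arbitrary closed embedding of $Z$ (which in general only puts $Z$ into $\A^{2d+1}$, too big when $d$ is close to $n-2$), a \emph{birational} model can always be taken to be a hypersurface in $\A^{d+1}$, and $d+1\le n-1$ leaves exactly enough room.

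The key step is the reduction to a hypersurface model. Since $k$ has characteristic zero, the function field $k(Z)$ is a finitely generated extension of $k$ of transcendence degree $d$; choosing a transcendence basis $t_1,\dots,t_d$, the extension $k(Z)/k(t_1,\dots,t_d)$ is finite and separable, so by the primitive element theorem $k(Z)\cong k(t_1,\dots,t_d)[\theta]$ for a single $\theta$. Clearing denominators in the minimal polynomial of $\theta$ over $k(t_1,\dots,t_d)$ gives an irreducible polynomial in $k[t_1,\dots,t_d,u]$ whose vanishing locus is an irreducible hypersurface $W_0\subset\A^{d+1}$ with $k(W_0)\cong k(Z)$; thus $W_0$ is birational to $Z$. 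Because $d\le n-2$ we have $d+1\le n-1$, so we may identify $\A^{d+1}$ with the coordinate subspace $\{w_{d+2}=\dots=w_n=0\}$ of $\{0\}\times\A^{n-1}$ and set $W\defeq W_0$, viewed this way as a closed subvariety $W\subset\{0\}\times\A^{n-1}\subset\A^n$ that is birational to $Z$.

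Applying Theorem \ref{thm: main} to $W$ (which is an algebraic subvariety of $\{0\}\times\A^{n-1}$) yields a surjective algebraic morphism $F\colon\A^n\to\A^n\setminus W$, which is precisely the assertion of Corollary \ref{CorollaryBirationalProof}.

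In this argument there is no serious obstacle: the only step requiring any care is the hypersurface reduction, and this is exactly where the hypothesis $\dim Z\le n-2$ enters essentially — it is what guarantees the birational model fits inside $\A^{n-1}$. It is also the source of the gap between $W$ and $Z$ in the statement: replacing "birational to $Z$" by "isomorphic to $Z$" would require $Z$ itself to embed as a closed subvariety of $\A^{n-1}$ (or more generally require a fundamentally different construction), which is why the sharper question about $\A^n\setminus Z$ for $Z$ given up to isomorphism remains open in general.
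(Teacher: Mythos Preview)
Your proof is correct and follows essentially the same approach as the paper's: find a birational hypersurface model of $Z$ in $\A^{d+1}\subset\{0\}\times\A^{n-1}$ and then invoke Theorem~\ref{thm: main}. The only difference is that the paper simply cites this birational-hypersurface fact as classical (Shafarevich, Hartshorne), whereas you spell it out via a transcendence basis and the primitive element theorem.
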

\begin{proof}
    It is a classical result that there is an algebraic variety $W\subset \{0\}\times\mathbb{A}^{n-1}\subset\mathbb{A}^n$ birational to $Z$ (see \cite[Theorem 1.8]{Shafarevich2013alggeo1}, \cite[Theorem I.4.9]{hartshorne1977}).  
    Theorem \ref{thm: main} furnishes a surjective algebraic morphism $F\colon\mathbb{A}^n\rightarrow \mathbb{A}^n\setminus W$. 
\end{proof}

Given a variety $X$, we denote by $\aut(X)$ its group of automorphisms. A nontrivial regular action $\varphi\colon \ga \times X \to X$ by the additive group $\ga = (k, +)$ gives rise to a subgroup $G$ of $\aut(X)$, which we call a \emph{$\ga$-subgroup}.
We consider $X = \an\setminus Z$ and fix some point $p\in \an\setminus Z$. By \cite[Proposition 2, Corollary 1]{arzhantsev2023images} (see also \cite[Corollary 1.11]{arzhantsev2013flexible}), one can find a finite sequence of $\ga$-subgroups $G_i$ of $\aut (\an\setminus Z)$ such that
\begin{equation*}
    (G_N\cdot \ldots \cdot G_1).p = \an\setminus Z.
\end{equation*}
The sequence of groups $G_1,\ldots,G_N$ furnishes a surjective morphism $F\colon\A^N \to \an\setminus Z$, where $F\colon (g_1,g_2,\ldots,g_N) \mapsto (g_N \ldots g_1.p)$. 

If $W$ is an algebraic subvariety of $\mathbb{A}^n$, we denote by $\aut(\mathbb{A}^n,W)$ the set of algebraic automorphisms of $\mathbb{A}^n$ fixing $W$ pointwise. 
Towards a complete solution for the existence of surjective algebraic maps $\mathbb{A}^n\rightarrow \mathbb{A}^n\setminus Z$, we propose the following conjectural approach: 

\begin{conjecture}\label{ConjectureApproach}
     Let $Z\subset \mathbb{A}^n$ be an algebraic subvariety of codimension at least $2$. Then there exist 
     
     1) $N$ hypersurfaces $W_1,\ldots ,W_{N}\subset \mathbb{A}^n$ containing $Z$, with $\bigcap _iW_i = Z$, 
     
    2) $\ga$-subgroups $G_{i}\subset \aut(\mathbb{A}^n,W_i)$,

     3) A point $p\in \mathbb{A}^n\setminus Z$, 
  and
     
     4) A subvariety $H\subset \mathbb{A}^{N}$ isomorphic to $\mathbb{A}^{n}$, 
     
     such that for $\psi:=(G_{N}\cdot\ldots\cdot G_{1}).p\colon \mathbb{A}^{N}\rightarrow \mathbb{A}^n\setminus Z$, the restricted map $\psi|_H$ is surjective.
\end{conjecture}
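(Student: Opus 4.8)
The plan is to follow the two-step route announced after the statement: build an explicit surjection $\Phi\colon\A^{m+2n-2}\to\A^n\setminus Z$ out of Gromov--Winkelmann-type $\ga$-actions, and then restrict $\Phi$ to a copy of $\A^n$ sitting inside its domain.

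First I would normalise $Z$. Name the range coordinates $w_1,\dots,w_n$, so $Z\subseteq\{w_1=0\}$; identifying $\{w_1=0\}$ with $\anmi$, pick (using \cite{Kronecker,Storch1972,EisenbudEvans1973}) polynomials $g_1,\dots,g_m\in k[w_2,\dots,w_n]$ with $m\le n$ and $Z=\{w_1=0\}\cap\bigcap_{j=1}^m\{g_j=0\}$ in $\A^n$; the relevant case is $\codim_{\A^n}Z\ge 2$, i.e.\ $Z\subsetneq\{w_1=0\}$, since if $Z=\{w_1=0\}$ no such $F$ exists. Fix a base point $p=(p_1,\dots,p_n)\in\A^n\setminus Z$ with $p_1\ne 0$ and $(p_2,\dots,p_n)\notin Z$. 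Following \cite{arzhantsev2023images,Winkelmann,Gromov1986partial} I would use two kinds of $\ga$-action on $\A^n\setminus Z$: the vertical shears $\sigma_i(t)\colon w_i\mapsto w_i+t\,w_1$ for $i=2,\dots,n$, which fix $\{w_1=0\}$ and hence $Z$ pointwise, and the shears $\tau_j(s)\colon w_1\mapsto w_1+s\,g_j(w_2,\dots,w_n)$ for $j=1,\dots,m$, which fix $Z$ pointwise because $g_j$ vanishes on $Z$. Composing, in order, all $\sigma_i(t_i)$, then all $\tau_j(s_j)$, then all $\sigma_i(t_i')$, and evaluating at $p$, gives a morphism $\Phi\colon\A^{m+2n-2}\to\A^n\setminus Z$ (coordinates $t_2,\dots,t_n$; $s_1,\dots,s_m$; $t_2',\dots,t_n'$),
\[
\Phi(\mathbf t,\mathbf s,\mathbf t')=\bigl(A,\ b_2+t_2'A,\ \dots,\ b_n+t_n'A\bigr),\qquad b_i=p_i+t_ip_1,\qquad A=p_1+\textstyle\sum_{j=1}^m s_j\,g_j(b_2,\dots,b_n).
\]
Surjectivity of $\Phi$ is a short case check on a target $q=(a_1,\dots,a_n)$: if $a_1=0$ then $(a_2,\dots,a_n)\notin Z$, so choose $\mathbf t$ with $b_i=a_i$, one $s_j$ to force $A=0$, and $\mathbf t'$ arbitrary; if $a_1\ne 0$, choose $\mathbf t$ with $(b_2,\dots,b_n)\notin Z$, one $s_j$ to force $A=a_1$, and $t_i'=(a_i-b_i)/a_1$.

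The substantive step is the restriction. I would take $H\subset\A^{m+2n-2}$ to be the graph of a polynomial map keeping the $n$ coordinates $(\mathbf t,u)=(t_2,\dots,t_n,u)$ free (with $u$ in the role of $s_1$) and imposing $s_j=f_j(u)$ for $j=1,\dots,m$ and $t_i'=h_i(u)$ for $i=2,\dots,n$, so that the constrained coordinates depend on $u$ alone; then $H\cong\A^n$, and with $b_i=p_i+t_ip_1$ and $A=p_1+\sum_j f_j(u)\,g_j(b_2,\dots,b_n)$ one has $\Phi|_H(\mathbf t,u)=(A,\ b_2+h_2(u)A,\dots,b_n+h_n(u)A)$. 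As $\mathbf t\mapsto(b_2,\dots,b_n)$ is a bijection of $\anmi$, surjectivity of $\Phi|_H$ reduces to two test cases for a target $q=(a_1,\dots,a_n)$. If $a_1=0$ then $(a_2,\dots,a_n)\notin Z$; putting $b_i=a_i$ pins down $\mathbf t$, and it suffices that $u\mapsto A=p_1+\sum_j f_j(u)\,g_j(a_2,\dots,a_n)$ be non-constant, for then it attains $0$ and $\Phi|_H$ hits $q$---and for $f_j=u^{\,j}$ this is automatic, since $(a_2,\dots,a_n)\notin Z$ makes some $g_j(a_2,\dots,a_n)$ nonzero. If $a_1\ne 0$, put $b_i=a_i-h_i(u)\,a_1$ (again fixing $\mathbf t$ in terms of $u$); then a short computation gives $\Phi|_H(\mathbf t,u)=\bigl(Q(u),\ a_i+h_i(u)(Q(u)-a_1)\bigr)_i$ with $Q(u):=p_1+\sum_j f_j(u)\,g_j\bigl(a_2-h_2(u)a_1,\dots,a_n-h_n(u)a_1\bigr)$, so $\Phi|_H$ hits $q$ as soon as the one-variable polynomial $Q$ attains the value $a_1$, i.e.\ as soon as $Q$ is non-constant. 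For pure powers $f_j=u^{\,j}$, $h_i=u^{\,k_i}$ the non-constancy of $Q$ is a non-cancellation statement about leading terms that can be forced by choosing $k_2,\dots,k_n$ distinct, large, and widely spaced---provided the coordinate system is mildly generic; this is the regime of Theorem \ref{thm: main thm with pure powers}, where the small degrees of $f_j,h_i$ also keep $\deg F$ low.

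The main obstacle is precisely this non-constancy of $Q$, together with removing the genericity hypothesis so as to obtain Theorem \ref{thm: main} for every $Z$. Geometrically the difficulty is concentrated on what $\Phi|_H$ must cover beyond the reach of the naive choice $h_i\equiv 0$: that choice already hits every point $(a_1,\dots,a_n)$ with $(a_2,\dots,a_n)\notin Z$, together with the slice $\{w_1=p_1\}$, so what remains is essentially the cylinder $(\A^1\setminus\{0\})\times\bigl(Z\cap\{w_1=0\}\bigr)$; reaching it is exactly what forces $H$ to be non-linear and makes the non-constancy of $Q$ indispensable. For an arbitrary $Z$ one replaces the pure powers $u^{\,j},u^{\,k_i}$ by polynomial entries $f_j,h_i$ chosen carefully enough---for instance of sufficiently generic shape and high degree, or obtained by first precomposing $\Phi$ with a generic triangular automorphism of $\A^{m+2n-2}$---and proves the corresponding $Q$ is non-constant for every target with $a_1\ne 0$. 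Establishing that, and then bookkeeping the degree of $F=\Phi|_H$, is the technical heart of the argument; the rest is the elementary calculus of shears recorded above.
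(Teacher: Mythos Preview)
The statement you are asked to prove is a \emph{conjecture}: the paper does not prove it in general, and explicitly leaves it open. What the paper does prove are the special cases Theorems~\ref{thm: main} and~\ref{thm: main thm with pure powers}, both under the extra hypothesis $Z\subset\{0\}\times\A^{n-1}$.

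Your opening move, ``normalise $Z$: name the range coordinates $w_1,\dots,w_n$, so $Z\subseteq\{w_1=0\}$'', silently imports exactly that hypothesis. For a general $Z$ of codimension~$\ge 2$ this cannot be arranged by a linear (or even polynomial automorphic) change of coordinates: the twisted cubic $V(w_2^2-w_1,\,w_2^3-w_3)\subset\A^3$ discussed in the paper lies in no hyperplane, and reducing it to the hyperplane case already requires a nonlinear automorphism tailored to $Z$. So as a proof of the conjecture your argument has a genuine gap at the very first step; everything downstream (the shears $\sigma_i,\tau_j$, the map $\Phi$, the graph $H$) depends on $Z$ sitting in $\{w_1=0\}$.

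Granting that hypothesis, your outline coincides with the paper's proof of Theorem~\ref{thm: main}: the same $(n{-}1)+m+(n{-}1)$ sequence of $\ga$-actions, the same base point, the same restriction of $\Phi$ to the graph of a one-variable polynomial map, and the same reduction of surjectivity to the non-constancy of a single polynomial $Q(u)$. Where you write ``distinct, large, and widely spaced'' exponents, the paper makes the precise choice $h_i(u)=u^{\,p_{i-1}}$ with $p_k=1+d+\cdots+d^k$ and $f_j(u)=u^{(j-1)(1+p_{n-1})}$; these exponents are chosen so that distinct monomials of the $q_j$'s contribute to distinct powers of $u$ after substitution, which is exactly what forces $Q$ to be non-constant without any genericity assumption on coordinates. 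Your ``mildly generic'' variant is the paper's Theorem~\ref{thm: main thm with pure powers}. In short: you have rediscovered the strategy the paper uses for its theorems, but you have not supplied a proof of the conjecture itself, and neither does the paper.
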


In the setting of Conjecture \ref{ConjectureApproach}, since $Z\subset W_i$, $\aut(\mathbb{A}^n,W_i)$ is naturally a subgroup of $\aut(\mathbb{A}^n\setminus Z)$, for all $i$. We will sometimes write $G_{i,a_i} = G_i$ to emphazise the parameter $a_i$. While similar to the setting in \cite{arzhantsev2023images}, which looks at 1-parameter subgroups of $\aut(\mathbb{A}^n\setminus Z)$, the above formulation of Conjecture \ref{ConjectureApproach} aims for a more concrete strategy to look for surjective morphisms $\mathbb{A}^n\rightarrow \mathbb{A}^n\setminus Z$ in practical settings.

Both proofs of Theorems \ref{thm: main} and \ref{thm: main thm with pure powers} follow the strategy in Conjecture \ref{ConjectureApproach}.  

{\bf Plan of the paper.} In Section \ref{sec: proofs} we prove the main theoretical results. In Section \ref{sec: explicit computational} we present some computational experiments, which are the inspiration for Theorems \ref{thm: main} and \ref{thm: main thm with pure powers} and are evidences to support Conjecture \ref{ConjectureApproach}. In the appendix we present some algorithms used in the experiments, and illustrations of how one can write code in Macaulay2 to perform the experiments. 

{\bf Acknowledgements.} The authors are partially supported by the Young Research Talents grant 300814 from Research Council of Norway. The first author thanks Mateusz Micha{\l}ek for some help with the code of the paper \cite{Harris2019images}, and thanks Finnur L\'arusson for some discussions on the problem while at University of Adelaide several years ago. The authors thank John Christian Ottem for interest and useful comments.

\section{Proofs of main results} \label{sec: proofs}

We now prove our main theorem, Theorem \ref{thm: main}. Then we state and prove an alternate version, giving a map of lower degree, Theorem \ref{thm: main thm with pure powers}.

\begin{proof}[Proof of Theorem \ref{thm: main}]
We begin by constructing a surjective map $\psi\colon \A^{m+2n-2}\to \A^n\setminus Z$ by using group actions of $\ga$-subgroups of $\aut(\an\setminus Z)$. 
$Z$ can be written as $Z=V(w_1, q_1,\ldots, q_m)$, where each $q_i$ is a polynomial in $w_2,\ldots,w_n$.
We use $2n-2+m$ groups $G_i$, and write $\varphi_i$ for the group action $\varphi_i\colon G_i\times \an\setminus Z \to \an\setminus Z$. We use the following $\varphi_i$'s. 
\begin{align*}
\varphi_i\colon (a_i, w_1,\ldots,w_n) &\mapsto 
(w_1,\ldots, w_i, w_{i+1}+a_iw_1, w_{i+2},\ldots,w_n)\\
\varphi_{i+n-1}\colon (c_i, w_1,\ldots,w_n) &\mapsto 
(w_1+c_iq_i(w_2,\ldots,w_n), w_2,\ldots,w_n)\\
\varphi_{i+n-1+m}\colon (b_i, w_1,\ldots,w_n) &\mapsto 
(w_1,\ldots, w_i, w_{i+1}+b_iw_1, w_{i+2},\ldots,w_n).
\end{align*}
We fix a point $p=(1,0,\ldots,0)$ and compose the group actions to obtain $\psi = (G_{2n-2+m}\ldots G_1).(p)\colon \A^{2n-2+m}\to \A^n\setminus Z$, with the following explicit formula.
\begin{multline} \label{eq: surjective psi in ai ci bi}
\psi = (1+\sum_{j=1}^{j=m}
    c_jq_j(a_1,\ldots,a_{n-1}),
    a_1+b_1w_1,\ldots,a_{n-1}+b_{n-1}w_1), 
\end{multline}
where $w_1$ is shorthand for the expression in the first coordinate,  $w_1= 1+\sum_{j=1}^{j=m} c_jq_j.$ We now find an $\an \subset \A^{m+2n-2}$ such that the restriction of $\psi$ to $\an$ is still surjective.

Let $d= \max(\deg(q_j))$, and set $p_k := d^k+d^{k-1}+\ldots+1$. The subvariety $H$ defined by $c_j=c_1^{(j-1)(1+p_{n-1})}$ for all $j\neq 1$, and  $b_i= c_1^{p_{i-1}}$ for all $i$, i.e. $H = V(c_j-c_1^{(j-1)(1+p_{n-1})}, b_i- c_1^{p_{i-1}})$, is isomorphic to $\mathbb{A}^n$. We define $F= \psi|_H$ and will prove that $F(a_i, c_1)$ is surjective. Below, we write $c$ in place of $c_1$.

Let $F\colon\A^n\to\A^n\setminus Z$ be
\begin{multline}
    F(a_i,c)= (1+cq_1+\sum_{j=2}^{j=m}
    c^{(j-1)(1+p_{n-1})}q_j(a_1,\ldots,a_{n-1}),\\
    a_1+cw_1,a_2+c^{d+1}w_1,\ldots, a_{i-1}+c^{p_{i-2}}w_1,\ldots,a_{n-1}+c^{p_{n-2}}w_1),
\end{multline}
where again, $w_1$ is shorthand for $w_1 = 1+cq_1+\sum_{j=2}^{j=m}c^{(j-1)(1+p_{n-1})}q_j(a_1,\ldots,a_{n-1})$. 
We check whether a point $(w_1,\ldots,w_n)$ is in the image, by checking whether the preimage $F\inv(w_1,\ldots,w_n)$ is empty. 
We have $n$ equations:
\begin{align}
    w_1&=1+cq_1+c^{1+p_{n-1}}q_2 +\ldots +c^{1+(m-1)p_{n-1}}q_m & w_i&= a_{i-1}+c^{p_{i-2}}w_1.
\end{align}
The last $n-1$ equations are solved for $a_i$ to obtain $a_{i-1}=w_i-c^{p_{i-2}}w_1$. It only remains to solve the following polynomial equation in $c$:
\begin{align} \label{eqn: poly in c}
    0&=cq_1+c^{1+p_{n-1}}q_2 +\ldots +c^{1+(m-1)p_{n-1}}q_m-w_1+1,
\end{align}
where now $q_j = q_j(a_1,\ldots,a_{n-1}) = q_j(w_2-cw_1, \ldots, w_n-c^{p_{n-2}}w_1).$

This has no solutions if and only if for all $j$, $q_j(w_2-cw_1, \ldots, w_n-c^{p_{n-2}}w_1)\equiv 0$.
This happens when $w_1=0$ and $q_j(w_2,\ldots,w_n)=0$ for all $j$. If any of those conditions are not met, then Eq. (\ref{eqn: poly in c}) is non-constant polynomial, and has solutions in $c$, since $k$ is algebraically closed. We have now computed that the preimage of a point $(w_1,\ldots,w_n)$ is empty if and only if $(w_1,\ldots,w_n)\in Z$, and conclude that $F\colon\an\to\an\setminus Z$ is surjective.
\end{proof}
\begin{rem}
An upper bound for the degree of $F$ constructed in the above proof is $mp_{n-1} = md^{n-1}+md^{n-2}+\ldots+m$.
\end{rem}

We now state and prove an alternate version of our main theorem, which can provide maps of smaller degrees. In order to do that, we first need the following lemma, which is probably well known.

\begin{lem}\label{lem: power of y_n}
    For any set of nonconstant polynomials $\{q_j(x_1,\ldots,x_n)\}$, where each $q_j$ has degree $d_j$, we can always find a linear change of coordinates from $x_i$ to $y_i$ in such a way that each $q_j$ contains a monomial $a_jy_n^{d_j}$ for some constants $a_j\not=0$. 
\end{lem}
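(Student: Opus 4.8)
The plan is to reduce the statement to a genericity argument about the leading forms of the $q_j$. Write $Q_j$ for the top-degree homogeneous component of $q_j$, so that $q_j = Q_j + (\text{terms of degree} < d_j)$ with $Q_j$ a nonzero homogeneous polynomial of degree $d_j$. Under a linear change of coordinates given by $x = Ay$ with $A\in \mathrm{GL}_n(k)$, the coefficient of the pure power $y_n^{d_j}$ in $q_j(Ay)$ is obtained by discarding all $y_i$ with $i<n$; if $v$ denotes the last column of $A$ (that is, $v = A e_n$), this coefficient equals the coefficient of $t^{d_j}$ in the one-variable polynomial $q_j(tv)$, which is precisely $Q_j(v)$, since the lower-degree terms of $q_j$ contribute only to powers $t^e$ with $e<d_j$. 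Hence it suffices to find a single vector $v\in k^n$ with $Q_j(v)\neq 0$ for all $j$, and then complete $v$ to a basis of $k^n$, taking $A$ to be the matrix whose columns are that basis with $v$ placed last; the resulting coordinates $y_i$ (the entries of $A^{-1}x$) then satisfy the conclusion with $a_j = Q_j(v)$.

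Next I would produce such a $v$. Each $Q_j$ is a nonzero polynomial, so its vanishing locus $V(Q_j)\subset k^n$ is a proper Zariski-closed subset; equivalently $\prod_j Q_j$ is a nonzero polynomial. Since $k$ is algebraically closed, and in particular infinite, a nonzero polynomial cannot vanish on all of $k^n$, so there exists $v\in k^n$ with $\prod_j Q_j(v)\neq 0$, i.e.\ $Q_j(v)\neq 0$ for every $j$. Finally I would observe that $v$ can be completed to an invertible matrix: because $d_j\geq 1$ we have $Q_j(0)=0$, so $v\neq 0$, and any nonzero vector extends to a basis of $k^n$. With $A$ chosen this way, the computation of the first paragraph shows that in the coordinates $y$ each $q_j$ contains the monomial $Q_j(v)\,y_n^{d_j}$ with $Q_j(v)\neq 0$, which is exactly the assertion.

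I do not expect a serious obstacle: the only point that genuinely needs the hypotheses is the \emph{simultaneous} choice of $v$, which relies on the fact that over an infinite field a finite union of proper subvarieties is still proper (equivalently, a finite product of nonzero polynomials is nonzero) — here this is guaranteed since $k$ has characteristic zero and is algebraically closed. The remaining steps — identifying the coefficient of $y_n^{d_j}$ with the value of the leading form at $v$, and completing $v$ to a basis — are routine linear algebra.
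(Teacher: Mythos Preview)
Your proof is correct and follows essentially the same approach as the paper: both reduce to the observation that the coefficient of $y_n^{d_j}$ after a linear change of coordinates is the leading form $Q_j$ evaluated at a suitable vector, and then invoke genericity over an infinite field to make all these values nonzero simultaneously. The only cosmetic difference is that the paper fixes a specific triangular change $y_i = x_i + A_i x_n$, $y_n = x_n$ (so the relevant vector is $(-A_1,\ldots,-A_{n-1},1)$), whereas you allow an arbitrary invertible $A$ with last column $v$; the underlying argument is identical.
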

\begin{proof} One can define new variables $y_i=x_i+A_ix_n$, $(1\leq i\leq n-1)$, and $y_n=x_n$. For a general choice of $A_i$'s, the conclusion of the Lemma is satisfied. Indeed, any monomial of degree $d$ in the $x_i$'s will be transformed by this linear map to a homogeneous polynomial in the $y_i$'s which contains a monomial $ay_n^d$. We just need to choose the numbers $A_i$'s so that the coefficient of $y_n^{d_j}$ (which is a polynimial in the $A_i$'s) is non-zero.   
\end{proof}

\begin{rem}
    What Lemma \ref{lem: power of y_n} achieves is that for any fixed tuple $(y_1,\ldots,y_{n-1})$, one can change the value of $q_j(y_1,\ldots,y_n)$ by changing $y_n$.
\end{rem}

\begin{thm}
\label{thm: main thm with pure powers}
Let $Z$ be a subvariety $Z\subset \left(\{0\}\times \A^{n-1} \right) \subset \A^n$, defined by $Z=V(w_1, q_1(w_2,\ldots,w_n),\ldots, q_m(w_2,\ldots,w_n))$, where each $q_j$ contains a monomial $a_jw_n^{d_j}$ in the sense of Lemma \ref{lem: power of y_n}. 
Then there exists a surjective function $F\colon\A^n\to \A^n\setminus Z$ of degree $\leq m(d+1)+1$, where $d= \max(\deg(q_j))$.
\end{thm}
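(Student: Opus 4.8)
The plan is to imitate the construction in the proof of Theorem \ref{thm: main}, but to use the extra hypothesis---that each $q_j$ genuinely contains a pure power $a_jw_n^{d_j}$---to replace the large exponents $p_k = d^k + \ldots + 1$ by much smaller ones, so that fewer ``spreading'' automorphisms $\varphi_i$ are needed and the degree drops to $m(d+1)+1$. First I would set up a surjective morphism $\psi\colon\A^{n+m}\to\A^n\setminus Z$ using only $m+?$ $\ga$-subgroups rather than $2n-2+m$: the key realization is that, because $q_j(w_2,\ldots,w_n)$ has a monomial $a_jw_n^{d_j}$, the single ``shear'' $\varphi\colon(t,w)\mapsto(w_1,\ldots,w_{n-1},w_n+tw_1)$ already suffices to make $\sum_j c_jq_j(w_2,\ldots,w_{n-1},w_n+tw_1)$ a nonconstant polynomial in the auxiliary variable $t$ whenever $w_1\ne 0$ (the top-degree term in $t$ is $\sum_j c_j a_j (tw_1)^{d_j}$, which cannot vanish identically for generic $c_j$). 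Concretely I would use the $m$ maps $\varphi_i\colon(c_i,w)\mapsto(w_1+c_iq_i(w_2,\ldots,w_n),w_2,\ldots,w_n)$ together with one map of the form $(t,w)\mapsto(w_1,\ldots,w_{n-1},w_n+tw_1)$, and linear shears to move $a_1,\ldots,a_{n-1}$ into the other coordinates; compose them starting from $p=(1,0,\ldots,0)$ to get an explicit $\psi$ on $\A^{n+m}$ (hence the ``$n+m$'' promised in the introduction).

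Next I would write down the subvariety $H\cong\A^n$ inside $\A^{n+m}$ by setting all but one of the $c_j$ equal to pure powers $c_1^{e_j}$ of a single parameter $c:=c_1$, and similarly tying the shear parameter to a power of $c$, choosing the exponents $e_j$ just large enough to separate the contributions of the different $q_j$ in the resulting univariate equation in $c$ but small enough to keep $\deg F\le m(d+1)+1$. Restricting $\psi$ to $H$ gives the candidate $F\colon\A^n\to\A^n\setminus Z$. Then I would check surjectivity exactly as before: given a target $(w_1,\ldots,w_n)$, solve the $n-1$ linear equations for the $a_i$, substitute, and be left with a single polynomial equation in $c$; using Lemma \ref{lem: power of y_n} (in the form of the Remark following it---varying $w_n$ changes $q_j$), argue that this equation is a nonconstant polynomial in $c$ unless $w_1=0$ and all $q_j(w_2,\ldots,w_n)=0$, i.e.\ unless $(w_1,\ldots,w_n)\in Z$; since $k$ is algebraically closed, a nonconstant polynomial has a root, so every point outside $Z$ is hit, and no point of $Z$ is hit because $\psi$ lands in $\A^n\setminus Z$ by construction.

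The main obstacle, I expect, is the bookkeeping of exponents: one must verify that with the small exponents dictated by the degree bound $m(d+1)+1$, the univariate polynomial in $c$ obtained after substitution is still \emph{nonconstant} precisely on the complement of $Z$. The subtlety is that the various terms $c^{e_j}q_j(w_2-\cdots,\ldots,w_n-c\cdots)$ could in principle conspire to cancel the leading term in $c$; ruling this out requires identifying which monomial carries the top degree in $c$ and showing its coefficient is (up to the nonvanishing constants $a_j$) one of the $q_j$ evaluated at the target point, or $w_1$, so that it vanishes only on $Z$. This is where the pure-power hypothesis does the real work, and where I would be most careful in the write-up.
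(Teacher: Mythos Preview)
Your proposal is correct and essentially identical to the paper's proof: the paper likewise builds $\psi$ from the $n-1$ initial shears, the $m$ maps $w_1\mapsto w_1+c_jq_j$, and a \emph{single} final shear on the last coordinate, then restricts by setting $b_i=0$ for $i<n-1$, $b_{n-1}=c$, and $c_j=c^{(j-1)(d+1)+1}$, arriving at $F(a_i,c)=\bigl(1+\sum_j c^{(j-1)(d+1)+1}q_j(a),\,a_1,\ldots,a_{n-2},\,a_{n-1}+cw_1\bigr)$. Your anticipated ``cancellation'' obstacle is handled exactly by this choice of exponents: since $q_j(w_2,\ldots,w_n-cw_1)$ has $c$-degree at most $d$, the term $c^{(j-1)(d+1)+1}q_j$ occupies the disjoint degree window $[(j-1)(d+1)+1,\,j(d+1)]$, so the sum is identically zero in $c$ iff each $q_j(w_2,\ldots,w_n-cw_1)\equiv 0$, which by the pure-power hypothesis forces $w_1=0$ and $q_j(w_2,\ldots,w_n)=0$.
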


\begin{proof}
By the same procedure as in the proof of \ref{thm: main}, we construct the map $\psi$ in Eq. (\ref{eq: surjective psi in ai ci bi}). However, we then restrict to the subvariety defined by $c_j=c_1^{(j-1)(d+1)+1}$ for all $j$, $b_i = 0$ for all $i< n-1$, and $b_{n-1}=c_1$, and obtain the following map.
\begin{align}
    F(a_i,c)&= (1+
    \sum_{j=1}^{j=m}c^{(j-1)(d+1)+1}q_j,
    a_1,a_2,\ldots,a_{n-1}+cw_1),
\end{align}
where $q_j=q_j(a_1,\ldots,a_{n-1})$. 
Computing the preimage of a point $w$ reduces to solving the following 
polynomial equation in $c$:
\begin{align} \label{eq: poly in c thm pure}
    0&=
    cq_1+\ldots+c^{(m-1)(d+1)+1}q_m 
    -w_1+1,
\end{align}
where $q_j=q_j(w_2, \ldots, w_n-cw_1)$.
This has no solutions if and only if for all $j$, $q_j(w_2, w_3,\ldots, w_n-cw_1)\equiv 0$.
This happens when $w_1=0$ and all $q_j(w_2,\ldots,w_n)=0$. Lemma \ref{lem: power of y_n} ensures that Eq. \ref{eq: poly in c thm pure} is solvable otherwise.
\end{proof}

\section{Some explicit computational examples} \label{sec: explicit computational}
In this section we present some experiments which are the inspiration for Theorems \ref{thm: main} and \ref{thm: main thm with pure powers}, as well as give support to Conjecture \ref{ConjectureApproach}. 

To be able to perform the computations, we need the following tools: 

Tool 1: Algorithms to compute the image of algebraic morphisms. We will in particularly use the ideas from \cite{Barakat2021algorithmic}. For the readers' convenience, we briefly recall this in the appendix.   

Tool 2: Algorithms to construct surjective morphisms $\mathbb{A}^N \rightarrow \an \setminus Z$, where in general $N$ can be much bigger than $n$. 

In Section \ref{sec: intro}, we mentioned the general result by \cite{kusakabe2022surjective} on the existence of surjective algebraic maps $\mathbb{A}^{n+1}\rightarrow X$, for algebraic subelliptic manifolds $X$ of dimension $n$.  However, this construction (because of some non-intrinsic features) may yield complicated maps which make computations inefficient in the cases we consider. 

Instead, here we use the ideas from \cite{Winkelmann}, and subsequent developments in \cite{arzhantsev2013flexible, arzhantsev2023images} for flexible varieties. Here we briefly recall the idea for our setting $X=\A^n\setminus Z$. We can start with $\ga$-subgroups  $G_1,\ldots ,G_n$ of $\aut(X)$ (constructed explicitly in \cite{Winkelmann} by utilising general linear projections to hyperplanes of $\A^n$), so that the orbit of one point by $F_n=(G_n\cdot \ldots \cdot G_1)$ contains a Zariski-open subset in $X$. Then, for any point $x_1$ in $X$ not contained in the image, we compose $F_n$ with the action of another 1-parameter subgroup $G_{n+1}(a_{n+1},.)$ to cover $x_1$.  Since the action $G_i(0,.)$ is the identity map on $X$, it follows that the image of $F_{n+1}=(G_{n+1} \cdot F_n).x_0$ is strictly bigger than that of $F_n$ and contains $x_1$. Composing with more such maps, we can even arrange that the map is dominant over $x_1$. We inductively do this whenever there are still some missing points. This process will terminate after a finite number of steps, thanks to Chevalley's theorem and the Noetherian property.

In \cite{Winkelmann}, an explicit way to construct such 1-parameter families of automorphisms for $X=\mathbb{A}^n\setminus Z$ is given. It works as follows. For a generic coordinate system $z_1,\ldots ,z_n$, we consider the projection $\pi _i$ from $\mathbb{A}^n$ to the hyperplane $\{z_i=0\}$, and let $f_i(z_1,\ldots ,z_{i-1},z_{i+1},\ldots ,z_n) $ be a polynomial which vanishes on $\pi _i(Z)$. Then we construct the $\ga$-subgroup $G_i(a_i,.)\in \aut(\mathbb{A}^n,\{f_i=0\})$  by the following formula: 
\begin{equation}
G_i(a_i,z)=(z_1,\ldots ,z_{i-1}, z_i+a_if_i(z_1,\ldots ,z_{i-1},z_{i+1},\ldots ,z_n),z_{i+1},\ldots ,z_n).
\end{equation}

We write $W_i = \{f_i=0\}$, for the fixed point set of $G_i$.
In experiments we find that there are cases (even in small dimensions like $3$) the construction by Winkelman \cite{Winkelmann} may yield very complicated polynomials making computations with Macaulay2 unfeasible. In such cases, we may need to work with other 1-parameter families of automorphisms. Conjecture \ref{ConjectureApproach} was stated to cover such cases.

Now we are ready to present our experiments, following the strategy proposed in Conjecture \ref{ConjectureApproach}.  
This first example inspired Theorems \ref{thm: main} and \ref{thm: main thm with pure powers}.

\begin{exam}[Smooth cubic curve] \label{ex: ellcurve d=c} Let $E=V(w_2^2-w_3^3-w_3,w_1)\subset \mathbb{A}^3$ be a smooth plane cubic curve. We want to construct a surjective morphism $\mathbb{A}^3\rightarrow \mathbb{A}^3\setminus E$. 

According to the general strategy, we find $\mathbb{G}_a$-actions on $\A^3$ that fix the curve $E$. 
By using the idea of projection to hyperplanes from \cite{Winkelmann}, we find actions $\varphi_i\colon G_i\times \mathbb{A}^3\setminus E \to \mathbb{A}^3\setminus E$.
\begin{align*}
    \varphi_1: (t,w) &\mapsto (w_1,w_2+tw_1,w_3)\\
    \varphi_2: (t,w) &\mapsto (w_1,w_2,w_3+tw_1)\\
    \varphi_3: (t,w) &\mapsto (w_1+t(w_2^2-w_3^3-w_3),w_2,w_3).
\end{align*}
Set $\varphi_4=\varphi_2$, and
note that the fixed hypersurfaces are $W_1=V(w_2^2-w_3^3-w_3)$ and $W_2=W_3=W_4=V(w_1)$. 
We pick $p=(1,0,0)\in \mathbb{A}^3\setminus E$ and composing group actions $(G_4\cdot G_3 \cdot G_2 \cdot G_1).p $ to obtain a surjective map to $\A^3\setminus E$.
\begin{align*}
    \varphi_1: (a,1,0,0) &\mapsto (1,a,0)\\
    \varphi_2: (b,1,a,0) &\mapsto (1,a,b)\\
    \varphi_3: (c,1,a,b) &\mapsto (1+c(a^2-b^3-b),a,b)\\
    \varphi_4: (d,1+c(a^2-b^3-b),a,b) &\mapsto (1+c(a^2-b^3-b),a,b+d(1+c(a^2-b^3-b))).
\end{align*}
The resulting map $\psi\colon \A^4_{a,b,c,d} \to \A_{w_1,w_2,w_3}^3\setminus E$ is given by 
\begin{align}
    \psi(a,b,c,d) &\mapsto (1+c(a^2-b^3-b),a,b+d+cd(a^2-b^3-b)).
\end{align}
We claim $\psi$ is surjective. 
Moreover, the restriction $F=\psi|_{d=c}$ is surjective, and hence we obtain a surjective morphism $\colon \mathbb{A}^3\rightarrow \mathbb{A}^3\setminus E$. This is checked with the Macaulay2 code in Appendix \ref{subsec: M2 code for example}, and can also be seen directly by using the proof of Theorem \ref{thm: main thm with pure powers}.
\end{exam}

The next example is not literally covered by Theorems \ref{thm: main} and \ref{thm: main thm with pure powers}, but can be covered by these theorems after a change of coordinates. 

\begin{exam}[Twisted cubic]
Let $A$ be the curve $A=V(w_2^2-w_1,w_2^3-w_3).$ The approach of constructing $\ga$-actions by projection to coordinate hyperplanes as in \cite{Winkelmann} does not obviously result in a surjective map $\A^4\to \A^3\setminus A$.

Instead, we may change coordinates appropriately to reduce to the case where Theorems \ref{thm: main} and \ref{thm: main thm with pure powers} apply. Let $\tau: \A^3\to \A^3$ be the automorphism 
\begin{equation*}
     \tau: (w_1,w_2,w_3) \mapsto (w_1-w_2^2,w_2,w_3-w_2^3),
\end{equation*}
and its inverse is $\tau\inv = (w_1+w_2^2,w_2,w_3+w_2^3)$.
Notice that $\tau(A)=(0,w_2,0)$.
We now use Theorem \ref{thm: main thm with pure powers} with $q_1=w_3$. This yields
\begin{equation}\label{eq: f line in c3}
    f({a},{b},c)= (1+c{b},
    {a},{b}+c(1+c{b})),
\end{equation}
which has image $\A^3\setminus V(w_2)$. Hence $F=\tau\inv \circ f$ has image $\A^3\setminus A$.

We may also arrive at $F$ as follows. For each group action $\varphi_i\colon G_i \times X \to X$ used to build $f$, we define $\tilde\varphi_i = \tau\inv(\varphi_i(G_i,\tau(X)))$. In this example
\begin{align}
    \varphi_1(t) &= (w_1+tw_3,w_2,w_3)&
    \tilde\varphi_1 &= (w_1+t(w_3-w_2^3),w_2,w_3)\\
    \varphi_3(t) &= (w_1,w_2,w_3+tw_1)&
    \tilde\varphi_3 &= (w_1,w_2,w_3+t(w_1-w_2^2))\\
    \varphi_2(t) &= (w_1,w_2+tw_1,w_3).&&
\end{align}
Finally, we have
\begin{multline}
        \tilde\varphi_2 = (w_1-w_2^2+ (w_2+{a}(w_1-w_2^2))^2, w_2+{a}(w_1-w_2^2), \\
        w_3-w_2^3+(w_2+{a}(w_1-w_2^2))^3)
\end{multline}
Note that the $\tilde\varphi_i$'s respectively fix the subvarieties $\tilde{W}_1 = V(w_3-w_2^3)$, and $\tilde{W}_2= \tilde{W}_3 = V(w_1-w_2^2)$. We pick a point $p=(1,0,0)$ and act (by slight abuse of notation) by $\psi = (\tilde\varphi_3\cdot \tilde\varphi_1 \cdot \tilde\varphi_3 \cdot \tilde\varphi_2).p : \A^4 \to \A^3\setminus A$. Then $F \defeq \psi|_{c=d}$ is given by 
\begin{equation} \label{eq: F twisted cubic}
    F({a},{b},c)= (1+c{b}+{a}^2,
    {a},{b}+c(1+c{b})+{a}^3),
\end{equation}
which has image $\A^3\setminus A$. We can also use Macaulay2, similar to the code in the appendix, to verify that indeed we have a surjective map $\A^3\rightarrow \A^3\setminus A$.
\end{exam}

\bibliography{mylib}

\newpage

\appendices
\section{Algorithmically computing the image}
In this appendix we briefly summarize an algorithm to compute the image of an algebraic morphism which is convenient for our setting, and illustrate how we can write code in Macaulay2 to run the experiments in Section \ref{sec: explicit computational}.

\subsection{An algorithm for computing the image of an algebraic map} \label{sec: algorithm to check surjectivity}
We recall here the algorithm in \cite{Barakat2021algorithmic} for computing the constructible image of an algebraic map, which has been convenient for our purposes. It is also worth mentioning the Macaulay2-library TotalImage described in \cite{Harris2019images}, which has been very useful for maps of lower degree.

Given a polynomial map $f\colon Z\rightarrow \an$, where $Z\subset \mathbb{A}^N$ is a closed algebraic subvariety. We want to decide $f(Z)$, which is a constructible set by Chevalley's theorem. 

We can assume that $Z$ is irreducible. We note that $\overline{f(Z)}$ is a closed algebraic subvariety of $\an$. The main idea is to find a {\bf proper} algebraic subvariety $W\subset \overline{f(Z)}$ so that $\overline{f(Z)}\setminus f(Z)\subset W$. Then, we have: 
\begin{eqnarray*}
    f(Z)=[\overline{f(Z)}\setminus W]\sqcup [W\cap f(Z)]=[\overline{f(Z)}\setminus W]\sqcup [f(f^{-1}(W)\cap Z)].
\end{eqnarray*}

Since $f^{-1}(W)\cap Z$ is a closed algebraic subvariety of $\mathbb{A}^N$ of smaller dimension than $Z$, we can proceed inductionaly to have a complete decomposition of $f(Z)$ into sets of the form $V_1\setminus V_2$, where both $V_1$ and $V_2$ are closed algebraic subvarities of $\mathbb{A}^n$. 

We can first reduce to the case where the dimensions of $Z$ and $f(Z)$ are the same. If not, then we can choose a random linear subspace $L\subset\mathbb{A}^N$ of appropriate dimension such that $L\cap Z$ has the same dimension as $f(Z)$ and $f(L\cap Z)$. (This is the same as checking that $\overline{f(Z)}$ has the same dimension as $\overline{f(L\cap Z)}$ and $L\cap Z$.)

With the assumption as in the previous paragraph, we can explicitly find one such $W$ as follows. We let $\Gamma _f\subset \mathbb{A}^N\times \an$ be the graph of $f$, let $H=\mathbb{P}^N\setminus \mathbb{A}^N$ be the hyperplane at infinity, $\overline{\Gamma}_f\subset \mathbb{P}^N\times \an$ be the closure of $\Gamma _f$, and $\Gamma _f^{\infty}=\overline{\Gamma}_f\cap (H\times \an)$. Then $W$ can be chosen as the projection of $\Gamma _f^{\infty}$ to the second factor $\an$. 

The above algorithm gives a constructive proof of Chevalley's theorem mentioned before. 

Implementation detail: The above algorithm can be implemented in different computer algebra systems. In our experiments later, we will use the free software Macaulay2 \cite{M2}.

We note that the map $f$ can be identified with the projection to $\an$ from the graph $\Gamma _f$. As such, the closure of the image of $f$ can be computed by the elimination technique, using Gr\"obner basis.

The closure $\overline{\Gamma}_f\subset \mathbb{P}^N\times \an$ can be computed by homogenizing, then saturating with respect to the ideal of the hyperplane at infinity. 

The closure of the image of the projection to $\an$ from $\Gamma _f^{\infty}$ again can be computed by the elimination of variables.

\subsection{Illustrating code in Macaulay2}  
\label{subsec: M2 code for example}
Here we present the code in Macaulay2 used to verify Example \ref{ex: ellcurve d=c}. It executes the algorithm described above. Note that it is not a complete implementation of the algorithm in \cite{Barakat2021algorithmic}. 

\begin{lstlisting}[language=Macaulay2]
r = QQ[a,b,c,e,w1,w2,w3, Degrees => {1,1,1,1,0,0,0}]
 affineGraph = ideal(1+c*(a^2-b^3-b)-w1,a-w2,b+c+c^2*(a^2-b^3-b)-w3)
 homogenized = homogenize(affineGraph,e)
 graphClosure = saturate(homogenized, e)
 graphAtInfty = graphClosure + ideal(e)
\end{lstlisting}
We have now computed \texttt{graphAtInfty}, which is  $\Gamma_f^{\infty} \subset \mathbb{P}^{2}\times \A^3$. We now compute its projection down to $\A^3$ from the different open affine subsets by elimination.
\begin{lstlisting}[language=Macaulay2]
 h1  = graphAtInfty + ideal(a-1)
 h1elim = eliminate({a,b,c,e}, h1)
 h2 = graphAtInfty + ideal(b-1)
 h2elim = eliminate({a,b,c,e}, h2)
 h3 = graphAtInfty + ideal(c-1)
 h3elim = eliminate({a,b,c,e}, h3)
\end{lstlisting}
After eliminating $a,b,c,e$ from \texttt{h1} and \texttt{h2}, the resulting ideals are just $(1)$. With \texttt{h3} on the other hand, we get a nonzero ideal, whose radical is $(w_1)$. We now add this to our original graph ideal to get $\Gamma_1$.
\begin{lstlisting}[language=Macaulay2]
 idealToAdd = radical h3elim
 gamma1 = affineGraph + idealToAdd
\end{lstlisting}
Then we repeat the whole process replacing \texttt{affineGraph} with \texttt{gamma1}.
\begin{lstlisting}[language=Macaulay2]
 homogenized = homogenize(gamma1,e)
 graphClosure = saturate(homogenized, e)
 graphAtInfty = graphClosure + ideal(e)
 h1  = graphAtInfty + ideal(a-1)
 h1elim = eliminate({a,b,c,e}, h1)
 h2 = graphAtInfty + ideal(b-1)
 h2elim = eliminate({a,b,c,e}, h2)
 h3 = graphAtInfty + ideal(c-1)
 h3elim = eliminate({a,b,c,e}, h3)
 idealToAdd = radical h3elim
 gamma2 = gamma1 + idealToAdd
 homogenized = homogenize(gamma2,e)
\end{lstlisting}
This final homogenized ideal is $(1)$, meaning that the algorithm finished, and that the added ideal $(w_1, w_2^2-w_3^3-w_3)$ describes the complement of the image, $Z = V(w_1,w_2^2-w_3^3-w_3)$.

\end{document}